\newif\ifsocg
\title{Pach's animal problem within the bounding box}
\author{Martin Tancer}{Department of Applied Mathematics, Charles University, 
Malostransk\'{e} n\'{a}m. 25, 118~00~~Praha~1, 
Czech Republic}{}{}{Supported by the GA\v{C}R grant no. 22-19073S.}
\authorrunning{M. Tancer}
\keywords{Animal problem, bounding box, non-shellable balls}
\newtheorem{theorem}{Theorem}
\newtheorem*{theorem*}{Theorem}
\newtheorem{lemma}[theorem]{Lemma}
\theoremstyle{definition}
\theoremstyle{remark}
\newcommand{\ProofEndBox}{{\ifhmode\unskip\nobreak\hfil\penalty50 \else
          \leavevmode\fi\quad\vadjust{}\nobreak\hfill$\Box$
            \finalhyphendemerits=0 \par}}
\newcommand{\R}{{\mathbb{R}}}
\newcommand{\Z}{{\mathbb{Z}}}
\newcommand{\bequal}{$\raisebox{1pt}{$\kern1.5mm
  {\scriptscriptstyle\circ}\kern-2.1mm\rule[-.4mm]{3mm}{.5pt}\kern-3mm\rule[1.2mm]{3mm}{.5pt}\kern1mm$}$}
\newcounter{sideremark}
\title{Pach's animal problem within the bounding box\thanks{Supported by the GA\v{C}R
grant no. 22-19073S.}}
\author[1]{Martin Tancer}
\affil[1]{\small Department of Applied Mathematics, Charles University, Malostransk\'{e} n\'{a}m.
25, 118~00~~Praha~1, Czech Republic}
\date{}
\begin{document}
\maketitle
\begin{abstract}
  A collection of unit cubes with integer coordinates in $\R^3$ is an
  \emph{animal} if its union is homeomorphic to the 3-ball. Pach's animal
  problem asks whether any animal can be transformed to a single cube by adding
  or removing cubes one by one in such a way that any intermediate step is an 
  animal as well. Here we provide an example of an animal that cannot be
  transformed to a single cube this way within its bounding box.
\end{abstract}

\section{Introduction}
\subparagraph{Pach's animal problem.}
A \emph{grid cube} is a subset of $\R^3$ that can be written as $[a,a+1] \times
[b,b+1] \times [c,c+1]$ where $a$, $b$ and $c$ are integers. A \emph{grid
complex} is a 3-dimensional polytopal complex\footnote{By a \emph{polytopal
complex} we mean a collection of polytopes in $\R^d$ for some $d$ (in our case
$d=3$) such that (i) every face of any polytope in the collection belongs to the
collection and (ii) an intersection of two polytopes in the collection is a
face of both.} formed by a finite collection of
grid cubes and the faces of the cubes in this collection. A grid complex is an
animal if the union of cubes in the complex (i. e., the \emph{polyhedron} of
the complex) is homeomorphic to a 3-ball. In 1988 Pach asked whether
any animal can be transformed to a single cube by adding or removing cubes one
by one in such a way that any intermediate step is an animal as
well~\cite{orourke88}. This question is known as \emph{Pach's animal problem}
and has been reproduced in several other venues (including SoCG); see,
e.g., notes in Chapter 8 of~\cite{ziegler95} or~\cite{dumitrescu-pach04, orourke11}.  In the following text,
when we consider cube removals or additions, we always mean that each
intermediate step is an animal.

Surprisingly, this innocent-looking question is actually very complex and
resistant. On the
one hand, there are examples of animals that cannot be transformed to a single
cube by removals only: The first one (the author is aware of) is Furch's
``knotted hole ball'' from~1924~\cite{furch24} (see also~\cite{ziegler98}).
Another one from 1964 is a 3-dimensional variant of famous Bing's house with two
rooms~\cite{bing64}.\footnote{The aim of the constructions of Furch and Bing is
to obtain so called non-shellable balls. But for an animal non-shellable
exactly means that the animal cannot be transformed to a single cube by
removals only.} After Pach asked about the animal problem, Shermer
obtained particularly small such animals independently of the earlier
results~\cite{shermer88}. (See also~\cite{orourke11}.) On the other hand,
allowing also cube additions adds much more flexibility how to transform
animals. If we replace ``cube removals'' and ``cube additions'' with closely
related ``collapses'' and ``anticollapses'', it follows from a classical result of
Whitehead~\cite{whitehead50} that any animal can be reduced to a point (or a
cube) by collapses and anticollapses. But the integer grid does not seem to be
flexible enough to emulate all possible anticollapses. Altogether, adding
geometric restrictions coming from the integer grid to classical setting in
topology makes the question interesting.

In 2010, Nakamura wrote a technical report claiming a solution of Pach's animal
problem~\cite{nakamura10}. This technical report neither appeared in a print
nor it has been verified by the community. The author of this work believes
that Nakamura's proof contains significant gaps. Thus Pach's animal
problem should be still regarded as open. We will comment on this in more
detail in the appendix.

\subparagraph{Pach's animal problem within the bounding box.}
For all the aforementioned examples, even if they cannot be transformed to a single cube
by removals, it is extremely easy to transform them to a single cube if we also
allow additions of cubes. %Considering a box of dimensions $a \times b \times c$
%made of $abc$ unit cubes, all the aforementioned examples can be built by
%gradual removals of cubes from this box while each intermediate step is
%animal. 
All the aforementioned examples can be built by gradual removals of cubes from
the \emph{bounding box} (i. e., the smallest grid-aligned box containing the
animal) while each intermediate step is an animal. If we revert this process, each of the aforementioned examples can be
transformed to the bounding box (by cube additions) and then to a single cube
(by cube removals). 
%By \emph{bounding box} we mean the smallest box containing
%the animal with vertices with integer coordinates.

Dumitrescu and Hilscher~\cite{dumitrescu-hilscher11} provided an example of an
animal which cannot be transformed to the bounding box by cube additions but
this example is essentially the complement of Shermer's construction. Thus the
cost is that this animal can be easily transformed to a single cube by
removals. 

In principle it should be possible to combine (and possibly iterate) two types
of the aforementioned constructions which would require alternating cube
additions and cube removals in order to transform the animal into a single cube
within the bounding box. But this would still leave the hope that there is an
algorithm for Pach's animal problem which gradually simplifies the
``innermost'' part of the animal (or its complement) eventually reaching a
single cube.

We provide a new significantly stronger construction showing that this hope is vain.

\begin{theorem}
  \label{t:main}
  There is an animal $A$ such that it cannot be transformed to a single cube by
  additions or removals of cubes which are inside the bounding box of $A$. In
  fact, if we remove a cube from $A$ or add a cube to $A$ 
  contained inside the bounding box, we never obtain an animal.
\end{theorem}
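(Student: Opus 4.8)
The plan is to first reduce the topological statement to a purely local, combinatorial criterion and then to engineer an animal meeting that criterion. The key principle is the following \emph{disk criterion}: an addition or removal behaves exactly like a shelling step. Concretely, for an empty cell $c'$ adjacent to $A$, the complex $A \cup c'$ is again an animal if and only if the contact region $c' \cap A$ (the union of those facets of $c'$ shared with cubes of $A$) is a $2$-disk; dually, for $c \in A$, the complex $\overline{A \setminus c}$ is an animal if and only if $c \cap \overline{A\setminus c}$ is a $2$-disk, equivalently the union of the \emph{exposed} facets of $c$ (those lying on $\partial A$) is a $2$-disk. The ``if'' direction is just the statement that attaching a ball to a ball along a disk in their boundaries yields a ball, while the ``only if'' direction follows from a Mayer--Vietoris / van Kampen computation showing that a non-disk contact either disconnects the complex, creates a handle, or destroys the spherical boundary. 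Granting this, I would reduce \Cref{t:main} to building an animal $A$ in which every cube has non-disk exposed region (so no removal is possible) and every in-box empty cell has non-disk contact with $A$ (so no addition is possible).

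The second step is a short classification of which face-configurations on a single cube fail to be disks. Writing $e$ for the number of exposed facets of a cube of $A$, a direct check shows the exposed region is a disk precisely when $e \in \{1,3,5\}$, or $e=2$ with the two facets adjacent, or $e=4$ with the two interior facets adjacent; the only non-disk possibilities are therefore $e=2$ with opposite facets, $e=4$ forming a band (an annulus), or $e=0$ (an interior cube, automatically non-removable). The same list, read for an empty cell via the number $a$ of its facets meeting $A$, governs additions. Thus I must ensure that \emph{every} boundary cube of $A$ is exposed on exactly two opposite facets or on a four-facet band, and symmetrically that \emph{every} in-box empty cell touches $A$ on two opposite facets or on a band; cells not touching $A$ at all, and cubes not touching $\partial A$, are automatically safe.

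The third and main step is to exhibit an explicit modular $A$ realizing these constraints and to verify it is genuinely a ball. Since no single cube may be removed, $A$ cannot be shellable (the last cube of any shelling would be removable), so I cannot certify the ball property by building $A$ up cube-by-cube; instead I would verify directly that $A$ is a PL $3$-manifold with boundary (checking that the link of every grid vertex is a $2$-sphere or disk and the link of every grid edge is a circle or an arc), that $\partial A$ is a $2$-sphere, and that $A$ is simply connected, indeed contractible, whence $A$ is a $3$-ball. I expect the construction to be assembled from a few repeating gadgets so that the condition ``exposed region non-disk'' for each cube and the condition ``contact region non-disk'' for each empty cell reduce to a finite case analysis over the cube- and cell-types, with separate but routine handling of the cubes and cells abutting the six walls of the bounding box.

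The hard part is reconciling global ball-ness with this pervasive local rigidity. Demanding a non-disk contact at \emph{every} cube and \emph{every} empty cell forces the complement inside the box to be a dense system of thin tunnels and $A$ itself to be full of beams and pinches; but thin tunnels tend to create loops in $A$ (destroying simple connectivity) or free beam-ends (which are removable, violating the criterion), exactly the opposite of what is needed. The crux of the construction is therefore to lay out the tunnel system so that it consists of \emph{spanning arcs} rather than closed loops --- keeping $\pi_1(A)$ trivial and $\partial A$ spherical, in the spirit of the non-shellable balls of Furch and Bing --- while still pinching every cube and every cell into a non-disk configuration. Getting the box-wall cells right (avoiding both fillable dents, i.e.\ cells with $a=5$ or $a=1$, and removable convex corner cubes, i.e.\ cubes with an odd number of exposed facets) without breaking the manifold or simple-connectivity conditions is where I expect the delicate bookkeeping to lie.
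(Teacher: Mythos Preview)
Your overall architecture---reduce to a local ``disk criterion'' and then engineer an animal meeting it---matches the paper's strategy exactly. However, two genuine gaps prevent this from being a proof.

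\medskip
\textbf{The disk criterion is misstated.} You write the contact region as $c'\cap A$ but then gloss it as ``the union of those facets of $c'$ shared with cubes of $A$''. These are not the same: $c'\cap A$ also contains edges and vertices that $c'$ shares with edge-adjacent or vertex-adjacent cubes of $A$, even when the corresponding facets are not shared. The correct necessary condition (the paper's Lemma~\ref{l:remove}) is that \emph{both} $Q^{+}:=Q\cap A$ and $Q^{-}:=Q\cap(\text{cubes not in }A)$ are disks, with the full set-theoretic intersection. Your facet-only classification in step~2 therefore misses the most important obstruction: a cube may have a perfectly disk-like facet pattern while $Q^{+}$ or $Q^{-}$ contains an \emph{edge not lying in any of its squares} (coming from a diagonal neighbour of the wrong colour), which already prevents it from being a disk. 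Concretely, a cube whose two face-neighbours in $A$ are adjacent has $e=4$ in your notation, with the two interior facets adjacent---a disk on your list, hence ``removable''---yet in the paper's construction such a cube at a bend of the red path is \emph{not} removable precisely because the diagonal edge-neighbour is outside $A$, forcing a dangling edge in $Q^{-}$.

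\medskip
\textbf{The construction is the theorem, and it is absent.} Once the criterion is corrected, the target ``every boundary cube has exposed pattern $\{$two opposite facets$\}$ or $\{$four-band$\}$'' is far too restrictive; the paper never achieves this and almost certainly no ball does. Instead the paper exploits the edge-level obstructions systematically. The key idea you are missing is the use of \emph{box-filling curves with no U-turns}: a U-turn is exactly the local pattern that would leave some cube with a disk-shaped $Q^{\pm}$ after expansion. The paper builds specific U-turn-free Hamiltonian paths in a $4{\times}4{\times}4$ and a $7{\times}7{\times}4$ box, performs two nested ``expansions'' (each replacing a grid cube by a $3^3$ or $5^3$ block so that the curve becomes a thickened tube and its complement a black frame), and finally takes a mirror double to close off the one remaining free end. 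That $A$ is a ball is then checked by a shelling-type argument (removing the drilled tube box-by-box along the curve), not by the manifold/simply-connected route you outline. The finite case analysis you anticipate does occur, but it is organised around the bends and straight segments of the two filling curves and around the transition layers between the nested boxes---and at every step it is the ``edge not in a square'' mechanism, enabled by the absence of U-turns, that does the work.
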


Part of our motivation for proving Theorem~\ref{t:main} is also that we find it
realistic that this construction would be a part of a construction of a
counterexample to original Pach's animal problem (without any restriction coming from
the bounding box), of course, only if such a counterexample
exists.\footnote{The construction of $A$ shows a way to block the
``interior'' of an animal. (In this context the interior should be understood
loosely as a collection of cubes that are in the interior of the convex hull of
the animal.) The ``boundary'' cubes cannot be fully blocked. For example, a
cube can be always added to a top of a topmost cube. However it could be
realistic to control the ``expansion'' of the boundary using some ``undilatable
patterns'' (discussed in~\cite{nakamura10}) so that the cubes in the
``interior'' stay blocked no matter what is the expansion of the boundary.}

Another (independent) part of the motivation is that staying within the
bounding box is a natural restriction from point of view of digital geometry.
Here Pach's animal problem fits into a framework of deformations studied in 
digital geometry; see~\cite[Chapter~16]{klette-rosenfeld04}. (This is also a language used in Nakamura's technical
report~\cite{nakamura10}.) If cubes correspond to ``voxels'' stored in a computer,
then providing a bounding box is natural. We may also be interested in an
algorithm (possibly a brute force algorithm) transforming one animal into another (if this is possible). 
Our example shows that we have to leave the bounding box and it would be
interesting to know whether there is any (computable) bound on the space
required for such a transformation.\footnote{The author is strongly persuaded
that the animal constructed in this paper can be transformed into a single cube
if we allow leaving the bounding box. Thus our $A$ should not be a
counterexample to the original Pach's animal problem. On the other hand, it is
beyond the targets of this paper to provide a sequence of additions and
removals transforming $A$ to a cube possibly outside the bounding box. Thus we
provide only a sketch of an idea how to do so without any guarantee of
correctness: Step 1. It seems possible to magnify $A$ to an animal $A'$
obtained by replacing each cube of $A$ with $k^3$ cubes in $A'$ (for
arbitrarily chosen $k > 0$); see also Figure~\ref{f:dilation} in the appendix
for a 2-dimensional analogy. In our case,
$A$ does not contain ``undilatable patterns'' thus the approach sketched in
Nakamura's report~\cite{nakamura10} seems to work in our case. Step 2. Once we
have $A'$ it is essentially a subdivision of $A$. Here we would use that a
sufficiently deep barycentric subdivision of any traingulated/polytopal ball is
shellable; see~\cite[Cor. I.3.10]{adiprasito-benedetti17}. Then it seems
possible to emulate a shelling of an iterated barycentric subdivision via removals of
cubes in $A'$ provided that $k$ is sufficiently large.\\[1mm] 
The approach suggested here is, however, probably far from being optimal
if we want to minimize the space required for a transformation of $A$ to a single cube. It
is, for example, not known to the author whether it is possible to transform $A$ to a cube
within a box obtained by extending one of the dimensions of the bounding box by
1.}

Finally our problem is loosely related to reconfiguration problems of cubical
or crystalline robots~\cite{abel-kominers08arxiv, akitaya+21, feshbach-sung21, kostitsyna+23arxiv}. The general aim of such reconfiguration problems is to
transform one configuration of a robot (formed by modules which correspond to
grid cubes in our language) to another configuration following certain prescribed rules. Stated in this generality, our problem fits into this
framework; however additions or removals of cubes are perhaps less natural
conditions in the setting of reconfigurations of robots.

\section{Sketch of the construction (mostly in dimension 2)}
We start with a sketch of the construction, then we provide details.

\subparagraph{Furch's construction.}
The starting point is Furch's construction. Here the idea is to take a thickening
of a non-trivial knot made by grid cubes inside a box; see
Figure~\ref{f:furch}. More precisely, the knot here is an image of an interval
but together with a curve on the boundary it is a nontrivial knot in the
standard sense. All the cubes forming the knot are removed from the box except
a single one at one of the boundaries. This way we obtain an animal because we can
remove the cubes of the knot one by one while keeping the homeomorphism type.  
In this example, we can remove a few more cubes by keeping animality (for
example those that are at the corners) but at some point we get a non-trivial
animal from which no other cubes can be removed (not proved here---we are only
sketching an idea). 

\begin{figure}
\begin{center}
\includegraphics[page=2]{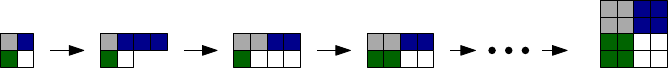}
  \caption{Furch's knotted ball. All displayed cubes are removed from the box
  except the dark one. The picture we provide here is very similar to a picture
  in~\cite{ziegler98}.}
  \label{f:furch}
\end{center}
\end{figure}

\subparagraph{A 2-dimensional example.} 
One of the key ingredients in our construction will be to redesign the knot so
that it fills all ``available'' space. In order to explain this idea (as well
as few other ones), it is beneficial to describe a simplified example in the
plane (but not fully reaching our goals). During the construction we specify
the dimensions of the intermediate objects because these dimensions will be
analogous in the final construction in the 3-space.

First, we consider a square $S$ of dimensions $4\times4$ subdivided into 16 smaller (unit) squares together with a certain (red) piecewise linear curve; see
Figure~\ref{f:first_expansion}, left. The segments of the red curve connect the
centers of pairs of adjacent squares, or possibly centers of the edges of the
squares on the boundary. Next we expand this picture as follows (see
Figure~\ref{f:first_expansion}, right): Each point $p$ in $S$ 
with coordinates which are integer multiples of $1/2$ will correspond to a unit
square $S(p)$ in the expanded picture. A square $S(p)$ in the expanded picture is
red, if $p$ is on the red curve, otherwise it is black. This way we obtain a
square $S'$ of dimensions $9\times9$ where some of the squares are black and other
ones are red. The reader is encouraged to think about this construction as an
analogy of Furch's construction in dimension 2 where the red squares correspond
to the cubes of the knot. (The aim of the introduction of the red curve and the
expansion is to describe the construction efficiently.)

\begin{figure}
\begin{center}
\includegraphics[page=3]{figures}
  \caption{The first expansion of a 2-dimensional example.}
  \label{f:first_expansion}
\end{center}
\end{figure}

Next we extend $S'$ to a rectangle of dimensions $9 \times 14$ as shown in
Figure~\ref{f:second_expansion}, left. The boundary squares are black, while
the remaining squares are white. We again draw a red curve through the red and
white squares as depicted. Inside the black squares, we connect the centers of
two black squares by a segment if they share an edge (essentially constructing
the dual graph). Next we perform an analogous expansion as in the previous
case; see Figure~\ref{f:second_expansion}, right. (For purposes of this sketch, we mostly rely on the picture; we only keep
the squares that correspond to the points on the red curve or black segments.
Because of this, the dimensions of the box after the second expansion are $17
\times 27$.)

\begin{figure}
\begin{center}
\includegraphics[page=4]{figures}
  \caption{The second expansion of a 2-dimensional example. The squares on both
  sides of the picture should be understood as unit squares. The dimensions of
  the right right picture are $17 \times 27$ but it is
  shrunk due to space constraints.}
  \label{f:second_expansion}
\end{center}
\end{figure}

Finally, we double the construction by taking the mirror copy of the collection
obtained so far, putting this mirror copy from below and connecting the red
squares of the two copies; see Figure~\ref{f:two_copies}, left.

\begin{figure}
\begin{center}
\includegraphics[page=5]{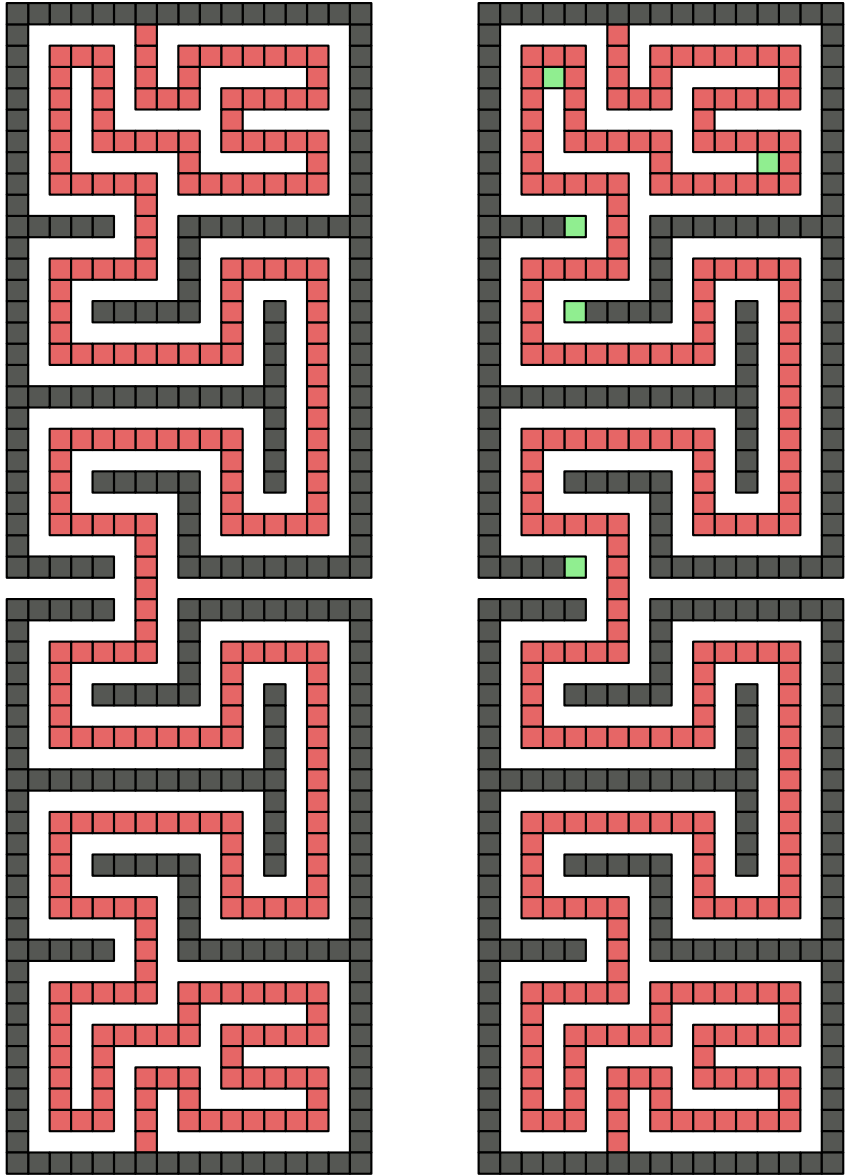}
  \caption{Left: Joining the construction from Figure~\ref{f:second_expansion}
  with its mirror copy. Now the dimensions are $17 \times 55$. Right: After adding or
  removing the squares in green we still have a 2-dimensional animal.}
  \label{f:two_copies}
\end{center}
\end{figure}

Now, it is easy to check that the black and the red squares in the final
construction form a $2$-dimensional animal. In dimension 2, it is easily
possible to remove or add squares within the bounding box so that we preserve
animality. Examples of squares that can be added or removed are depicted in
Figure~\ref{f:two_copies}, right. However, all such squares either correspond to
U-turns of the red curve (before the first expansion or before the second
expansion), or to the spots where the red curve leaves the original square $S$.

The main idea for the proof of Theorem~\ref{t:main} is that it is possible to
perform an analogous construction in dimension 3 so that U-turns are
avoided and the exceptional cases corresponding to the spots where the red
curve leaves the original square $S$ can be set up in a way that no addition or
removal of cubes is possible while preserving animality. This will prove
Theorem~\ref{t:main}.

\section{Full construction}

Now we provide the details for the 3-dimensional construction analogous to the
example in the previous section. 

\subparagraph{Box filling curves.} We start with a construction of
curves in a box that avoid U-turns. More precisely by a \emph{curve
filling a box} we mean a piecewise linear simple curve inside a box $B$ of dimensions $a \times b
\times c$ formed by $abc$ grid cubes, satisfying the following properties: (i)
The curve passes through the center of every grid cube inside the box; (ii) the
curve is composed of segments connecting the centers of pairs of grid cubes
sharing a square (that is, the pairs that are adjacent in the strongest
possible sense); and (iii) the curve does not contain any \emph{U-turn}; that is,
the curve does not pass through four different centers of grid cubes
$m_1, m_2, m_3, m_4$ consecutive in this order along the curve so that $m_2 -
m_1 = -(m_4 - m_3)$; see Figure~\ref{f:u-turn}. (If a curve
contains a U-turn, then it can be easily locally shortened within its isotopy
class making some space empty. For purposes of this paper we do not regard this as honestly filling the box.)

\begin{figure}
\begin{center}
\includegraphics[page=7]{figures}
  \caption{U-turn.}
  \label{f:u-turn}
\end{center}
\end{figure}

Surprisingly, box filling curves can already be found in quite small boxes
(ignoring the trivial case of a box $a \times 1 \times 1$). In our
construction, we will need a specific curve $\gamma_{444}$ in a box $4 \times 4 \times 4$ and
another one $\gamma_{774}$ in a box $7 \times 7 \times 4$. (It should not be a surprise that
these curves were found via a computer search.) These curves are drawn in
Figure~\ref{f:curves} layer by layer. (By a \emph{layer} we mean a collection
of cubes whose centers have the same $z$-coordinate.)

\begin{figure}
\begin{center}
\includegraphics[scale =.95, page=9]{figures}
  \caption{Box filling curves.}
  \label{f:curves}
\end{center}
\end{figure}

Throughout the rest of the construction, a grid cube $[a-1,a] \times [b-1,b]
\times [c-1,c]$ will also be called the $(a, b, c)$ cube. The curve
$\gamma_{444}$ fills the box $[0,4]^3$ and it connects the cube $(3,2,1)$ with
the cube $(3,2,4)$. It
will also be important later on that the second cube along the curve is
$(3,2,2)$ and the last but one cube is $(3,2,3)$. The curve $\gamma_{774}$ fills the
box $[0,7]^2 \times [0,4]$ and it connects the
cube $(5,3,1)$ with the cube $(2,6,4)$; the second cube is $(5,3,2)$ and the
last but one cube is $(2, 6, 3)$. It is routine (but slightly tedious) to
check that Figure~\ref{f:curves} indeed depicts curves (there is no hidden
cycle). It is again routine to check that there are no U-turns. (It is
immediately visible that there is no U-turn in a layer orthogonal to the
$z$-axis. Other U-turns would either consist of a segment in such a layer where
both endpoints continue up or both endpoints continue down---there is no such a
segment---or they would appear at the end of the path in a layer---there is no
such U-turn either but this requires a bit more effort to check because it is
necessary to check how the curve continues in the next layer.)

\subparagraph{The first expansion.} Now we proceed analogously as in the
2-dimensional case. We take the box $B_1 = [0,4]^3$ and we consider $\gamma_{444}$
inside this box. Because $\gamma_{444}$ connects the centers of cubes 
$(3,2,1)$ and $(3,2,4)$ it connects the points $p_1 := (2.5, 1.5, .5)$ and $q_1
:= (2.5, 1.5, 3.5)$. We extend this curve to the boundary by adding segments
$p'_1p_1$ and $q_1q'_1$ where $p'_1 := (2.5, 1.5, 0)$ and $q'_1 := (2.5,
1.5, 4)$. Then we declare the resulting
curve as the \emph{first red curve}. Altogether, the red curve connects the
points $p'_1$ and $q'_1$.

Next we perform the expansion. We consider a mapping $\phi$ from points in
$B_1$ with coordinates divisible by $1/2$ to grid cubes defined so that it maps
a point $p = (a/2, b/2, c/2)$ in $[0,4]^3$ to the grid cube $(a,b,c)$. With a
slight abuse of the notation, we extend this map to arbitrary subsets of $\R^3$: 
If $X \subset \R^3$, then by $\phi(X)$ we mean a grid complex formed by cubes
$(a,b,c)$ where $(a/2, b/2, c/2)$ belongs to $X$. (We also allow $X$ to be a
grid complex, and then we mean that $\phi$ is applied to the polyhedron of this
complex.) We remark that $\phi(B_1)$ consists of cubes filling the
box $B_2 = [-1,8]^3$. (Note that the lexicographically smallest cube in 
this box is $(0,0,0)$.)
We color a cube in $[-1,8]^3$ \emph{red} if the corresponding point belongs to
the red curve, otherwise it is colored \emph{black}. Note that the dual graph
of the red cubes is a path, denoted $\pi$, connecting the centers of cubes
$\phi(p'_1) = (5,3,0)$ and $\phi(q'_1) = (5,3,8)$. (Here we again use a slight
abuse of the notation identifying a cube with its coordinates.)
Here by the \emph{dual graph} of a collection of cubes
we mean the graph defined so that its vertices are centers of the cubes in the
collection and the edges, realized as straight line segments, connect the
centers of those pairs of cubes which share a square.

\subparagraph{The second expansion.} Now we extend $B_2$ to the box
$B'_2 = [-1,8]^2 \times [-1,13]$. Inside the ``subbox'' $[-1,8]^2 \times [8,13]$ we
declare a cube \emph{black} if it is on the boundary of $[-1,8]^2 \times
[-1,13]$; that is the (new) black cubes are of the form $(a,b, 13)$, $(a, 0, c)$, $(a, 8, c)$, $(0, b, c)$ or $(8, b, c)$ for $a, b \in \{0, \dots, 8\}$ and $c
\in \{9, \cdots, 13\}$. All remaining cubes of the subbox are white---these are
cubes inside \emph{white box} $W := [0,7]^2 \times [8,12]$. Note that the dimensions
of the white box are $7 \times 7 \times 4$ suitable for putting $\gamma_{774}$
inside it.

We obtain the \emph{second red curve} as follows. First, we take the path $\pi$
considered as a curve connecting the centers of the cubes $\phi(p'_1)= (5,3,0)$ and
$\phi(q'_1) = (5,3,8)$. Next we attach the segment connecting the centers of the cubes
$(5,3,8)$ and $(5,3,9)$. Next we attach (shifted) $\gamma_{774}$ inside the
white box. Because the white box is $[0,7]^2 \times [8,12]$ instead of $[0,7]^2
\times [0,4]$, this means that this $\gamma_{774}$ connects the centers of
cubes $(5,3,9)$ and $(2, 6, 12)$ (instead of $(5,3,1)$ and $(2,6,4)$). We
denote the endpoints of $\gamma_{774}$ as $p_2 = (4.5, 2.5, 8.5)$ and $q_2 =
(1.5, 5.5, 11.5)$.
Finally, we attach the curve to a black cube on the boundary of $[-1,8]^2 \times
[-1,13]$ by adding the segment $q_2q'_2$ where $q'_2 = (1.5, 5.5, 12)$.
This finishes the
construction of the second red curve. We recapitulate that it connects the points
$(4.5, 2.5, -.5)$ and $q'_2$.

Next we obtain a \emph{black dual complex} as follows (see Figure~\ref{f:black_dual} for a simplified example): First, we take the dual
graph of the collection of black cubes. Next, whenever four black distinct
cubes share an edge, we add a square to the complex which is the convex hull of
the centers of the four cubes. (Seemingly, we should also add a cube to the
black dual complex corresponding to eight black cubes sharing a vertex but such eight cubes do
not appear in our construction.)

\begin{figure}
\begin{center}
\includegraphics[page=24]{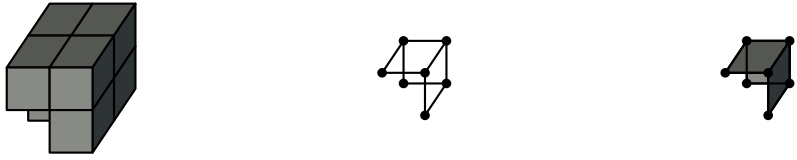}
  \caption{A simplified example of a construction of the black dual complex.
  Left: A collection of seven cubes for which we construct an analogy of the
  black dual complex. Middle: The dual graph of these cubes. Right: The
  resulting complex for these seven cubes.}
  \label{f:black_dual}
\end{center}
\end{figure}

Now we perform the second expansion. We observe that the cubes of
$\phi(B'_2)$ fill the box $B_3 = [-2, 15]^2 \times [-2,25]$.
Now we color the cubes in $B_3$, black, red or white as follows. 
(The coloring should be understood relative to the chosen box. For example, it
is completely plausible if an $(a,b,c)$ cube is red in $B'_2$ and black in
$B_3$. In other words, it is convenient to use the integer coordinates for both
$B'_2$ and $B_3$ but otherwise they should be considered as independent.)
The cube $Q = (a, b, c)$ in $B_3$ is \emph{black} if the corresponding point
$\phi^{-1}(Q)$ in $B'_2$ is in the black dual complex, it is \emph{red}, if the corresponding
point is on the second red curve, and it is \emph{white} otherwise.

%\begin{lemma}
%The black, red and white cubes inside $[-2,15]^3$ with respect to coloring in
%  $B_4$ satisfy the following properties. Consider an $(a,b,c)$ cube $Q$. Let
%  $(a', b', c') = (a \mod 4, b \mod 4, c \mod 4) \in \{0,1,2,3\}^3$.
%  \begin{enumerate}[(i)]
%    \item If $a' = b' = c' = 1$, then $Q$ is red.
%    \item If $a' = b' = 1$ and $c' \in \{0,2\}$, then $Q$ is red or white. It
%      is red if and only if TODO. An analogous claim is also true for any
%      permutation of the coordinates.
%    \item If $a', b', c' = \{0, 1, 2\}$ and at most one of these numbers is
%      equal to $1$, then $Q$ is white. 
%    \item If at least two of the values $a', b', c'$ are equal to $3$, then $Q$ is black.
%    \item If $a' = 3$ and $b' = c' = 1$, then $Q$ is black or white. It is
%      white if and only if TODO.
%      An analogous claim is also true for any  permutation of the coordinates.
%    \item If $a' = 3$ and $b',c' \in \{0, 1, 2\}$, $(b', c') \neq (1,1)$, then
%      $Q$ is black or red. It is red if and only if TODO. An analogous claim is
%      also true for any permutation of the coordinates.
% \end{enumerate}
%\end{lemma}
%
%\begin{proof}
%
%\end{proof}

\subparagraph{Bends and straight segments after the second expansion.}
Before we continue with construction, we describe in more detail how the bends
of the first red curve, or the bends of $\gamma_{774}$ as a part of the second
red curve affect the colors of cubes in $B_3$. This will be useful later on in the
proof of Theorem~\ref{t:main}.

Consider a grid cube $Q_1$ in $B_1$. The first red curve may either pass through
$Q_1$ as a straight segment connecting the centers of opposite squares of
$Q_1$, or it may form a bend connecting the centers of non-opposite squares
through the center of $Q_1$; see Figure~\ref{f:magnify_bends}, left.

\begin{figure}
  \begin{center}
    \includegraphics[page=11]{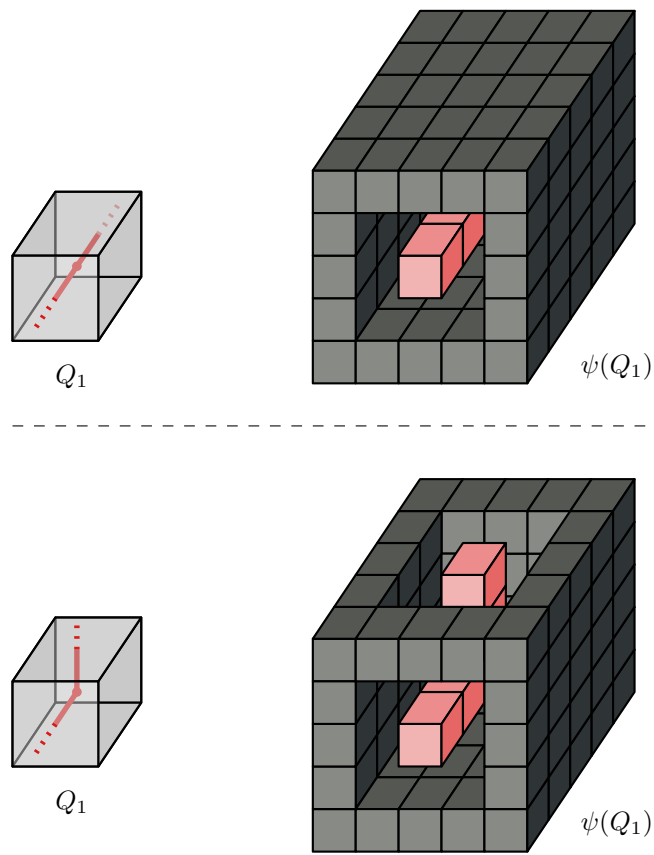}
    \caption{The two expansions of grid cubes in $B_1$. The white cubes are not depicted.}
    \label{f:magnify_bends}
  \end{center}
\end{figure}

Then it follows from the construction that the way how the first red curve passes
through $Q_1$ uniquely determines the colors of $5^3$ cubes in $\psi(Q_1)$
where $\psi(Q_1)$ is obtained by applying $\phi$ to the interior of (the
polyhedron of) $\phi(Q_1)$; see
Figure~\ref{f:magnify_bends}, right. We emphasize that this can be used even for cubes $(3, 2, 1)$
and $(3, 2, 4)$ where $\gamma_{444}$ starts and ends because the red curve is
extended in these cubes. (And it does not make a bend due to the properties of
$\gamma_{444}$.) We also remark that the boxes $\psi(Q_1)$ in general
overlap. For example, if $Q_1$ and $Q'_1$ share a square, then $\psi(Q_1)$
and $\psi(Q'_1)$ share $25$ cubes.

A very analogous description of the expansion can be provided for the cubes in
the white box $W$ of $B'_2$: This time we consider a cube
$Q_2$ in the white box $W$. There are again two ways how the second red curve may
pass through $Q_2$; see Figure~\ref{f:magnify_wb}, left. This determines the
colors (red or white) of $3^3$ cubes of $\phi(Q_2)$; see Figure~\ref{f:magnify_wb},
right. This applies also to the cubes where $\gamma_{774}$
starts or ends because the second red curve is extended beyond these endpoints.

\begin{figure}
  \begin{center}
    \includegraphics[page=12]{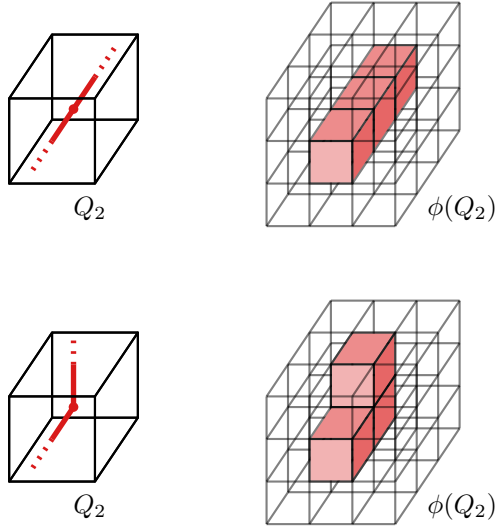}
    \caption{The second expansions of grid cubes in the white box of $B_3$. The
    white cubes are drawn as transparent.}
    \label{f:magnify_wb}
  \end{center}
\end{figure}

\subparagraph{Doubling.}

Finally, we extend $B_3$ to the box $B'_3 = [-2, 15]^2 \times [-30,25]$---this
is our final bounding box. Again we color cubes in $B'_3$ black, red or white.
For cubes that belong to both $B_3$ as well as $B'_3$ we keep the colors as
they are in $B_3$. Inside $[-2, 15]^2 \times [-30,-3]$ we take a mirror copy of
$B_3$ so that the colors are symmetric along the \emph{mirror plane} $\mu$ given by the
equation $z = -2.5$. It remains to color the cubes $(a,b, -2)$ for $a, b \in
\{-1, \dots, 15\}$ (these cubes have their centers on the aforementioned
plane).  Among these cubes, we color the cube $(9,5,-2)$ red, all other cubes
are white. (Note that the red cube $(9,5,-2)$ shares a square with exactly two
other red cubes $(9,5,-1)$ and $(9, 5, -3)$.

This finishes the construction. The desired animal $A$ from
Theorem~\ref{t:main} is the grid complex consisting of the black and the red cubes in
$B'_3$. It remains to check that $A$ is indeed an animal and that the result of
removing any cube or adding any cube within the bounding box is not an animal.
This is done in the next section.

\section{Correctness of the construction.}
In this section, we finish the proof of Theorem~\ref{t:main}.

\subsection{$A$ is an animal}
   
In this subsection we work in PL (piecewise-linear)
category~\cite{rourke-sanderson72}. In particular, a ball means a PL ball and
a manifold means a PL manifold.\footnote{We work in dimension $3$. It is well
known that the topological category and PL category coincide in dimension 3.
However, we still need a lemma stated in the PL world. (And we will not be
really using that those two categories coincide.)}
We will repeatedly need a special case of Lemma~3.25
from~\cite{rourke-sanderson72} which can be phrased in dimension 3 as follows:
\begin{lemma}
  \label{l:shelling}
Let $M_1$ and $N$ be $3$-manifolds meeting in a disk (a. k. a. 2-ball) on the
  boundary of both $M_1$ and $N$. Assume that $N$ is a $3$-ball. Then $M := M_1
  \cup N$ is a $3$-ball if and only if $M_1$ is a $3$-ball.
\end{lemma}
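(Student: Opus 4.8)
\section*{Proof proposal for Lemma~\ref{l:shelling}}

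The plan is to establish the stronger fact that, under the hypotheses alone (that $N$ is a $3$-ball and $D := M_1 \cap N$ is a $2$-ball lying in $\partial M_1 \cap \partial N$), the space $M = M_1 \cup N$ is \emph{PL homeomorphic} to $M_1$, with a homeomorphism that does not depend on whether $M_1$ is a ball. Since being a PL $3$-ball is a PL-homeomorphism invariant, both directions of the ``if and only if'' then follow at once.

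The first and main step is to identify $N$ with a collar over the gluing disk, i.e.\ to prove that the pair $(N,D)$ is PL homeomorphic to the standard pair $(D \times [0,1],\, D \times \{0\})$. This is where the genuine PL content sits. Since $\partial N$ is a PL $2$-sphere and $D \subseteq \partial N$ is a PL $2$-ball, the two-dimensional PL Schoenflies theorem shows that the closure $E := \overline{\partial N \setminus D}$ is again a $2$-ball meeting $D$ exactly in the common boundary circle $\partial D = \partial E$. Thus $\partial N = D \cup E$ is split into two disks, and I would invoke the standardness of such a ball pair---equivalently, that $N$ collapses onto $D$ and is therefore a regular neighbourhood of $D$ in $N$, so that regular-neighbourhood theory forces $(N,D) \cong (D\times[0,1],\, D\times\{0\})$.

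Granting this identification, $M = M_1 \cup N$ is obtained from $M_1$ by attaching an external collar $D \times [0,1]$ along the subdisk $D$ of its boundary (note $D$ is automatically a codimension-$0$ subsurface of the $2$-manifold $\partial M_1$). The final step is the routine collar-reparametrization argument: because $\partial M_1$ admits an interior collar in $M_1$, the externally attached collar can be absorbed into it, yielding a PL homeomorphism $M_1 \cup_D (D\times[0,1]) \cong M_1$ that is the identity away from a neighbourhood of $D$. Composing the two identifications gives $M \cong M_1$, which completes the argument.

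The step I expect to be the real obstacle is establishing that $(N,D)$ is a trivial ball pair. In arbitrary dimension this requires unknotting theorems, but in our dimension it is comparatively benign: the two-dimensional Schoenflies theorem guarantees that $\partial D$ is unknotted in $\partial N$, after which the standardness of a $3$-ball equipped with a boundary disk follows from classical regular-neighbourhood theory in the PL category. Everything else---the collar absorption and the passage from $M \cong M_1$ to the stated equivalence---is formal.
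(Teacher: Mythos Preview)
Your argument is correct and is in fact the standard route to this statement: identify the pair $(N,D)$ with $(D\times[0,1],D\times\{0\})$ (using, in dimension~$3$, the $2$-dimensional Schoenflies theorem together with the Alexander trick / regular-neighbourhood uniqueness to standardise a ball with a face in its boundary), and then absorb the resulting external collar into an internal collar of $\partial M_1$ to conclude $M\cong M_1$.

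There is nothing to compare on the paper's side, however: the paper does not prove Lemma~\ref{l:shelling} at all. It merely records the statement as the $3$-dimensional special case of Lemma~3.25 in Rourke--Sanderson~\cite{rourke-sanderson72} and then uses it as a black box. So your write-up supplies strictly more than the paper does. One small remark: your justification that ``$N$ collapses onto $D$'' is true but, as you phrase it, comes close to assuming what you want; the cleanest way to get $(N,D)\cong(D\times I,D\times\{0\})$ in this low dimension is to build a PL homeomorphism $\partial N\to\partial(D\times I)$ sending $D$ to $D\times\{0\}$ (possible once Schoenflies gives that $\overline{\partial N\setminus D}$ is a disk) and then extend over the balls by coning.
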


Let $R$ be the union of red cubes in $A$, let $K_+$ be the union of black cubes
in $A$ above the mirror plane $\mu$ and $K_-$ be the union of black cubes in $A$
below $\mu$. Note that $K_+$ and $K_-$ are disjoint (because
they are separated by $\mu$ which does not intersect any black cube) 
while both of them meet $R$ in a square. Indeed, $K_+$ meets $R$ in the square shared
by the cubes $(3, 11, 24)$ and $(3,11, 25)$. (Recall that $q'_2 = (1.5, 5.5, 12)$ is
one of the endpoints of the second red path, thus the center of the cube
$\phi(q'_2) = (3, 11, 24)$ is the endpoint of the path which is
the dual graph of $R$.) It is also straightforward to check that there are no
other places where $R$ and $K_+$ would intersect by following the second red curve inside the
white box and the (first) red curve inside $B_1$ and checking the result of
expansion(s) with an aid of Figures~\ref{f:magnify_bends} and~\ref{f:magnify_wb}. (It may
be also useful to compare this with the 2-dimensional analogue; see
Figure~\ref{f:two_copies}.) The argument for $K_-$ and $R$ is the same, mirror
symmetric.

Thus, in order to show that $A$ is an animal, it
is sufficient to show that $R$, $K_+$ and $K_-$ are 3-balls (using
Lemma~\ref{l:shelling} twice, first for $R$ and $K^+$, then for $R \cup K^+$
and $K^-$).

It is easy to check that $R$ is a 3-ball because the dual graph of the cubes in
$R$ is a path. Thus, $R$ can be obtained from a single cube by adding cubes one
by one meeting the previous collection in a square. It follows from a repeated
application of Lemma~\ref{l:shelling} that $R$ is a 3-ball.

Now we check that $K^+$ is a ball (the case of $K^-$ is symmetric). The idea is
that we obtain $K^+$ from the box $B_3$ (which is a 3-ball) by drilling a hole into it.
It is hard to provide a $3$-dimensional figure for this approach but it is easy
to display an analogy in $2$-dimensional setting; see Figure~\ref{f:animal_check}.
In detail, we follow the first red path cube by cube in $B_1$ and in each case
when we meet a cube $Q_1$ of $B_1$, we consider the box $\psi(Q_1)$; see
Figure~\ref{f:magnify_bends}. From this box, we remove a ``subbox'' of dimensions $3$,
$3$ and $4$. This subbox consists of 27 (red or white) cubes  not touching the
boundary of $\psi(Q_1)$ and 9 cubes (1 red, eight white) corresponding to the side (square) 
from which we have entered $Q_1$. Due to Lemma~\ref{l:shelling}, each such step
preserves the fact that we have a $3$-ball. At the very last cube of $B_1$,
we are in the `straight' case (due to the properties of
$\gamma_{444}$) and we remove all $3 \cdot 3 \cdot 5$ red or white cubes inside
the affected $5^3$ cubes. At this moment, the red and white cubes are inside the box
$\phi(W) = [-1, 14]^2 \times [16,24]$. Now, we remove $\phi(W)$ in a single step obtaining
$K^+$. We again know that we got a $3$-ball by Lemma~\ref{l:shelling}. This
finishes the proof that $A$ is an animal.

\begin{figure}
  \begin{center}
    \includegraphics[page=17]{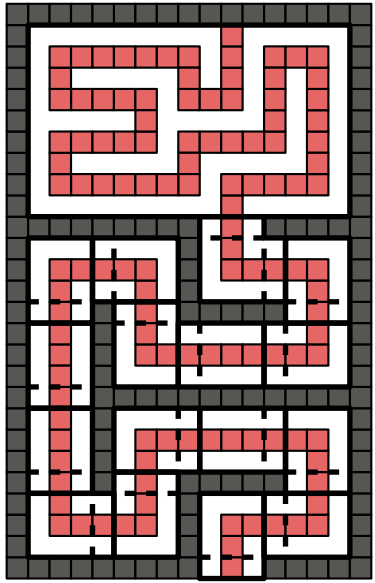}
    \caption{Checking that $A$ is an animal. The $3 \times 4$ boxes correspond
    to the $3 \times 3 \times 4$ boxes in the $3$-dimensional setting. The $3
    \times 3$ squares inside them correspond to the $3 \times 3 \times 3$ boxes
    in dimension~$3$. The final bend in the $2$-dimensional picture does not
    appear in the dimension $3$.}
    \label{f:animal_check}
  \end{center}
\end{figure}

\subsection{Additions and removals}

Now we prove that no cube within the bounding box of $A$ can be added to $A$ or
removed from $A$ while keeping that we have an animal. We will need the
following lemma characterizing removable or addable cubes, which must be a folklore.

\begin{lemma}
\label{l:remove}
  Let $A$ be an animal and $Q$ be a grid cube. Let $A'$ be the grid complex
  obtained from $A$ by removing $Q$ if $Q$ is in $A$, or by adding $Q$ if $Q$ is
  not in $A$; and let as assume that $A'$ is an animal as well. Let $N(Q)$ be the grid complex formed by the 26 grid 
  cubes different from $Q$ which intersect $Q$. Finally, let $A^+ := A \cap
  N(Q)$ and $A^-$ be the subcomplex of $N(Q)$ formed by those cubes which do
  not belong to $A$. Then $Q$ meets both $A^+$ and $A^-$ in a disk.
\end{lemma}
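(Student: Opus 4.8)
The plan is to reduce to the removal case and then analyze the $2$-sphere $\partial Q$. First I would observe that the statement is symmetric under the exchange of $A$ and $A'$: since $Q \notin N(Q)$, the complexes $A^+ = A \cap N(Q)$ and $A^-$ are literally the same whether computed from $A$ or from $A'$, and the conclusion mentions only $A^+$ and $A^-$. Hence we may assume $Q \in A$, so that $A' = \overline{A \setminus Q}$ is the animal obtained by the removal. Write $D := Q \cap A^+$ and $D' := Q \cap A^-$. Every point of $\partial Q$ lies in some cube of $N(Q)$, and $N(Q)$ is partitioned into $A^+$ and $A^-$, so $D \cup D' = \partial Q \cong S^2$; moreover $D = \partial Q \cap A'$ is exactly the region along which $Q$ is glued to the rest of $A$, while $\mathrm{int}_{\partial Q}(D') \subseteq \partial A$ is the part of $\partial Q$ exposed to the exterior. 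Since $A'$ is an animal, $Q$ is not an interior cube (a cavity would spoil ballness), so $A^- \neq \emptyset$; and since $A'$ is nonempty, $A^+ \neq \emptyset$. Thus neither $D$ nor $D'$ is all of $\partial Q$.

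Next I would show $D$ is acyclic. Since $A = A' \cup Q$ with $A' \cap Q = D$, and $A$, $A'$, $Q$ are all $3$-balls (hence contractible), the reduced Mayer--Vietoris sequence of this closed polytopal cover forces $\tilde H_*(D) = 0$. Alexander duality in the sphere $\partial Q$ then gives that $\partial Q \setminus D$ is connected and acyclic, which will yield acyclicity of $D'$ as well once we know that $D$ and $D'$ are honest subsurfaces meeting along a common boundary.

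The heart of the argument is to upgrade \emph{acyclic} to \emph{disk}, and this is where the manifold hypotheses on $A$ and $A'$ enter. I would examine the local structure of $\partial A$ (respectively $\partial A'$) at each edge and vertex of $Q$. Around the relative interior of an edge $e$ of $Q$ the four cubes sharing $e$ are cyclically arranged, and for $A$ to be a manifold there the cubes of $A$ among them must form a contiguous arc; in particular the \emph{pinching} pattern in which $A$ contains $Q$ and the opposite cube across $e$ but neither of the two face-neighbours along $e$ is excluded. The analogous (but more numerous) patterns around the eight cubes meeting at a vertex of $Q$ are likewise excluded by manifoldness of $A$ and of $A'$. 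Ruling out all these bad patterns shows that $D$ and $D'$ are each a compact $2$-manifold with boundary embedded as a subcomplex of $\partial Q$, with $D \cap D'$ their common boundary $C$. I expect this case analysis to be the main obstacle: it is the place where one must be careful and exhaustive, since it is exactly the failure of these local patterns that the manifold hypotheses are there to prevent.

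Finally, a compact connected surface with nonempty boundary is a disk precisely when it is acyclic, so the acyclic subsurface $D$ is a disk and its boundary $C$ is a single circle. By the PL Schoenflies theorem on the $2$-sphere $\partial Q$, the closure of the complementary region is also a disk; since the manifold structure identifies this closure with $D'$, the region $D' = Q \cap A^-$ is a disk as well. Thus $Q$ meets both $A^+$ and $A^-$ in a disk, which is the claim. Note that Lemma~\ref{l:shelling} is not used here to \emph{produce} the disk; rather, the present lemma supplies exactly the disk hypothesis that Lemma~\ref{l:shelling} requires when it is applied to rule out additions and removals.
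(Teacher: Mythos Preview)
Your proof is correct and follows essentially the same approach as the paper: a symmetry reduction to one of the two cases, Mayer--Vietoris on the decomposition $A' \cup Q$ with $A' \cap Q = D$ to force $\tilde H_*(D)=0$, and a local singularity analysis at edges and vertices of $Q$ using that both $A$ and $A'$ are manifolds. The only stylistic difference is in the endgame: the paper observes that once $Q^+$ and $Q^-$ are pure $2$-dimensional they are each a union of some of the six facets of $Q$, and then simply lists the four non-disk facet configurations ($\emptyset$, two opposite squares, the four-square annulus, the full sphere) and kills them one by one with $\tilde H_0$, $\tilde H_1$, $\tilde H_2$; you instead invoke the classification of surfaces and Schoenflies. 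Both routes are valid; the paper's enumeration is more elementary (and in fact your vertex case-analysis is redundant once purity is established, since any two facets of $Q$ through a common vertex share an edge, so a pure $2$-dimensional subcomplex of $\partial Q$ is automatically a surface with boundary).
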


\begin{proof}
  
  First of all, in the proof we can assume that $A$ does not contain $Q$ and thus
$A'$ is obtained by adding $Q$ to $A$. If this is not the case, we can just
  swap $A$ and $A'$ in the proof (the definitions of $A^+$ and $A^-$ are
  resistant to this swap).

Now, we set $Q^+ := Q \cap A^+ = Q \cap A$ and $Q^- := Q \cap A^-$.
  Next we observe that both $Q^+$ and $Q^-$ are pure $2$-dimensional; that is,
  every vertex and edge is contained in a square. See Figure~\ref{f:singular}
  when following the proof. For vertices, it is sufficient to show that there
  are no isolated vertices because a vertex in an edge belongs to a square if
  the edge belongs to a square.  If $Q^+$ contains an isolated vertex $v$ (that
  is, not contained in any edge of $Q^+$), then $v$ is a singular point of $A'$
  contradicting that $A'$ is an animal. If $Q^-$ contains an isolated vertex
  $v$, then $v$ is a singular point of $A$. If $Q^+$ contains an edge $e$ which
  is not contained in any square of $Q^+$, then any interior point of $e$ is a
  singular point of $A'$ and if $Q^-$ contains an edge $e$ which is not
  contained in any square of $Q^-$, then any interior point of $e$ is a
  singular point of $A$.

  \begin{figure}
    \begin{center}
      \includegraphics[page=10]{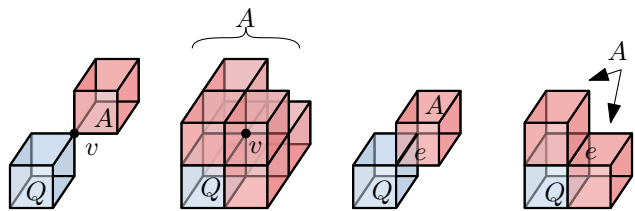}
      \caption{Cases when the singular points appear. Only the cubes that
      contain $v$ or $e$ are displayed.}
      \label{f:singular}
    \end{center}
  \end{figure}

At this moment we know that $Q^+$ and $Q^-$ are formed by collections of
  squares and the edges and vertices in these squares. Note that a square of
  $Q$ belongs to $Q^+$ if and only if it does not belong to $Q^-$. Thus it is
  sufficient to rule out the following cases: One of $Q^+$ or $Q^-$ contains no
  square or one of $Q^+$ or $Q^-$ contains exactly two opposite squares. (All
  other cases yield disks.) We immediately rule out the case $Q^+ = \emptyset$
  as $A'$ would be disconnected in this case. In all other cases, we utilize the
  Mayer-Vietoris exact sequence for reduced homology~\cite[Chap.~2.2]{hatcher02} using that
  $A' = A \cup Q$ and $Q^+ = A \cap Q$: 
  \[ \cdots \to \tilde H_{n+1}(A') \to \tilde H_{n}(Q^+) \to \tilde H_n(A) \oplus
  \tilde H_n(Q) \to \tilde H_n(A') \to
  \cdots \]

  From the fact that $A$ and $A'$ are animals and $Q$ is a cube, we get that
  $\tilde H_{n+1}(A'), H_n(A), H_n(Q)$ and $H_n(A')$ are all trivial. From
  exactness it follows that $H_{n}(Q^+)$ is trivial as well. But this rules out
  all previous cases. (If $Q^+$ consists of two opposite squares, then
  $\tilde H_0(Q^+) \cong \Z$; if $Q^+$ consists of four squares missing two
  opposite squares, then $\tilde H_1(Q^+) \cong \Z$; and if $Q^+$ consists of
  all six squares, then $\tilde H_2(Q^+) \cong \Z$.)
\end{proof}

We finish the proof of Theorem~~\ref{t:main} by distinguishing whether we
remove a red or a black cube or whether we add a with cube within the bounding
box. For clarity of the structure of the proof, it is useful to state this in
three separate lemmas.

\begin{lemma}
\label{l:red}
  Removing a red cube $Q$ from $A$ yields a grid complex $A'$ which is not an animal.
\end{lemma}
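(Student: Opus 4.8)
The plan is to exploit the global connectivity of $A$, which is severed at every red cube. Recall the relevant structure established in the previous subsection: the red cubes $R$ form a simple path in the dual graph; this path meets $K_+$ in a single square (at its end $(3,11,24)$) and meets $K_-$ in the mirror square at its other end; the two black masses $K_+$ and $K_-$ are disjoint and do not even touch (they are separated by the middle layer $z\in[-3,-2]$); and --- as one reads off from the construction with the aid of Figures~\ref{f:magnify_bends} and~\ref{f:magnify_wb} --- every red cube other than these two extreme ones has only red face-neighbours, since red cubes touch black cubes exclusively at the two extreme squares. Thus the dual graph of $A$ has the ``series'' shape $K_+ - R - K_-$.

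Before giving the argument I would record why the local criterion of Lemma~\ref{l:remove} does not by itself suffice. For a red cube $Q$, the complex $Q^+ = Q \cap A$ is exactly the union of the (at most two) squares of $Q$ shared with its neighbours on the red path, plus, for an extreme cube, the one square shared with $K_+$ or $K_-$. When the path runs straight through $Q$ these are two \emph{opposite} squares, so $Q^+$ is not a disk and Lemma~\ref{l:remove} immediately shows $A'$ is not an animal; the same holds at the two extreme cubes, where the two occupied squares are again opposite. The difficulty is a red cube sitting at a \emph{bend} of the path: there $Q^+$ consists of two squares sharing an edge, hence is a disk, and Lemma~\ref{l:remove} yields no contradiction. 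Handling these bend cubes is the main obstacle.

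I would resolve all cases uniformly by a connectivity argument. Since the dual graph of $R$ is a path, removing any single red cube $Q$ deletes a cut vertex and splits $R$ into two arcs: one, $R_+$, containing the end attached to $K_+$, and one, $R_-$, containing the end attached to $K_-$ (for an extreme $Q$ one arc is empty and the corresponding black mass simply becomes detached). Set $C_+ := K_+ \cup R_+$ and $C_- := K_- \cup R_-$, so that $A' = C_+ \cup C_-$. These two pieces share no square: two red cubes lying in different arcs are non-consecutive on the path and therefore share no face (the dual graph is a path), a red cube of one arc meets only its own black mass, and $K_+$, $K_-$ are disjoint. Consequently $A'$ is either disconnected (if $C_+$ and $C_-$ are truly disjoint) or connected only along some edges or vertices, at which points it fails to be a manifold. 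In either case $A'$ is not a $3$-ball, hence not an animal, which is the assertion of Lemma~\ref{l:red}.

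The one place demanding care is the bookkeeping behind the ``series'' structure: verifying from the construction that no red cube except the two extreme ones is adjacent to a black cube, and that the two resulting components genuinely share no $2$-face. For the latter I would observe that a shared square would force either two red cubes from different arcs to be face-adjacent --- contradicting that the dual graph of $R$ is a path --- or $K_+$ and $K_-$ to meet, which they cannot since they lie on opposite sides of the empty middle layer.
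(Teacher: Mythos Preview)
Your global connectivity argument is correct and gives a valid proof of the lemma, but it differs from the paper's approach. The paper stays entirely local via Lemma~\ref{l:remove}: at a straight red cube or an extreme red cube it observes that $Q^+$ is a pair of opposite squares, while at a bend it observes that $Q^-$ (not $Q^+$) contains an edge lying in no square of $Q^-$, hence $Q^-$ fails to be a disk. Thus your assertion that ``Lemma~\ref{l:remove} yields no contradiction'' at bends is mistaken---you only checked $Q^+$, whereas the lemma requires \emph{both} $Q^+$ and $Q^-$ to be disks, and the paper exploits $Q^-$ there. This does not damage your proof, since you abandon the local criterion anyway, but the motivating remark is inaccurate.

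As for the comparison: your argument is more structural---it uses that the dual graph of $A$ has the series shape $K_+ \text{--} R \text{--} K_-$, so deleting any red cube leaves two pieces sharing no $2$-face, forcing $A'$ to be disconnected or non-manifold. This is clean and uniform across straight, bend, and extreme cubes, and it relies only on facts already established for the ``$A$ is an animal'' subsection (that $R\cap K_\pm$ is a single square and $K_+\cap K_-=\emptyset$). The paper's proof is shorter and entirely local, but requires the separate bend/straight case split and an appeal to the picture. One minor wording issue: your phrase ``has only red face-neighbours'' should be ``has only red face-neighbours within $A$'', since every interior red cube certainly has white face-neighbours; your justification makes clear this is what you intend.
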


\begin{lemma}
\label{l:white}
  Adding a white cube $Q$ in the bounding box $B'_3$ to $A$ yields a grid complex $A'$ which is not an animal.
\end{lemma}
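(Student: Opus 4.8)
The plan is to invoke Lemma~\ref{l:remove} and then show that the configurations it permits never occur for a white cube. By that lemma, if adding a white cube $Q$ gave an animal $A'$, then both $Q^+ = Q \cap A$ and $Q^- = Q \cap A^-$ would be disks, where $Q^+$ and $Q^-$ are the subcomplexes of the sphere $\partial Q$ built from the squares of $Q$ facing $A$, respectively facing $A^-$, together with all edges and vertices shared with those cubes. Recall from the proof of Lemma~\ref{l:remove} that $Q^+$ must moreover be \emph{pure} $2$-dimensional: if $Q$ shares an edge or vertex with a cube of $A$ that lies on no face of $Q^+$, then $A'$ has a singular point there. Thus the lemma reduces to one assertion: for every white cube $Q$, the faces, edges and vertices of $Q$ meeting $A$ do not assemble into a disk.

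The core of the argument is cleanest in the singly expanded white box $W$, and I would carry it out there first. A white cube $Q$ corresponds, under $\phi^{-1}$, to a grid vertex, a grid edge-midpoint, or an \emph{uncrossed} grid face-centre of the cell grid of $W$; its neighbours are read off from Figure~\ref{f:magnify_wb}. Because $\gamma_{774}$ is space-filling, every cell of $W$ carries red curve, and this density is what defeats $Q$. If $\phi^{-1}(Q)$ is a grid vertex, the eight surrounding cell-centres are red and touch $Q$ only in a vertex, giving an isolated vertex in $Q^+$. If it is an uncrossed face-centre, the two cells sharing that face are red and are exactly the two opposite neighbours of $Q$, so $Q^+$ contains two opposite squares. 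The interesting case is an edge-midpoint: the four cells around that grid edge are all red and touch the four edges of $Q$ parallel to it, while the red \emph{faces} of $Q$ are precisely those dual to the curve-edges among these four cells. Since the Hamiltonian path is simple, these curve-edges form a proper sub-forest of the four-cycle on the four cells, so the red faces are none, one, two adjacent, two opposite, or three consecutive faces. In the first three cases some edge of $Q$ at an uncovered corner is shared with a red cell-centre but lies on no red face, so $Q^+$ is not pure; the fourth case is two opposite faces, not a disk; and the last forces the curve to reverse, i.e.\ a U-turn, which $\gamma_{774}$ avoids. In every sub-case $Q^+$ is not a disk.

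The doubly expanded region built from $\gamma_{444}$ is treated in the same spirit, the only change being that the tunnel has a $3\times3$ cross-section whose outer ring is black wall rather than far-away red. Here a corner cube of the cross-section touches the red axis across a single edge lying on neither of its two wall-faces (isolated edge), and an edge-centre cube touches a wall on one side and the red axis on the opposite side (two opposite faces); the same ``all four corner-edges occupied, hence either an opposite pair or a forbidden U-turn'' dichotomy governs the general case, and the no-U-turn property of $\gamma_{444}$ again eliminates the only disk configuration.

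What remains, and what I expect to be the main obstacle, is the finite list of places where the straight-segment symmetry breaks: the bends of both curves, the junction where the first tunnel meets the white box at the $(5,3,8)$--$(5,3,9)$ passage, the doubling layer $z=-2.5$ carrying the single red cube $(9,5,-2)$, the endpoints where the red curve leaves the box, and the white cubes just inside the boundary shell. For each of these finitely many local patterns I would read the $26$-cube neighbourhood off the construction together with Figures~\ref{f:magnify_bends} and~\ref{f:magnify_wb}, and exhibit in $Q^+$ an isolated vertex or edge, an opposite pair, a full band, or all six faces. The conceptual content is uniform---space-filling forces so many red or black neighbours that a disk could only arise from a reversal---so the no-U-turn condition is exactly the hypothesis that rules out the sole disk configuration; the real labour is confirming that the hand-built junction, doubling and endpoint patterns introduce no new exception.
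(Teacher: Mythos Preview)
Your strategy---invoke Lemma~\ref{l:remove} and show $Q^{\pm}$ fails to be a disk by a parity/local analysis, with the U-turn property ruling out the one surviving configuration---is exactly the paper's, and your treatment of white cubes in the interior of $\phi(W)$ matches the paper's case split (vertex, face-centre, edge-midpoint) almost verbatim. So the core is correct and aligned.

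Where you diverge is the doubly expanded $\gamma_{444}$ region and the mirror layer. You propose to rerun the same local parity analysis there, treating the $3\times3$ tunnel cross-section cube by cube and again invoking no-U-turns for $\gamma_{444}$. The paper instead dispatches both regions in one line with a global observation you are not using: every white cube $Q$ in $[-2,15]^3$ touches both a red cube and a black cube (read off from Figure~\ref{f:magnify_bends}), while $R$ and $K^+$ meet only at the single square between $(3,11,24)$ and $(3,11,25)$, far outside this box. Hence $Q^+$ has at least two components and is not a disk---no U-turn argument needed, and bends of $\gamma_{444}$ are handled automatically. The mirror layer is the same idea with $K^+$ and $K^-$ in place of $R$ and $K^+$. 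This shortcut is worth adopting: your sketched cross-section analysis (corner cube, edge-centre cube) covers only straight segments of the tunnel, and you correctly flag bends as exceptional cases still to be checked, whereas the disconnectedness argument swallows them for free. The no-U-turn property of $\gamma_{444}$ is then needed only for the \emph{black} cubes (Lemma~\ref{l:black}), not here.

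One small wording issue: in your grid-vertex case you say the eight surrounding red cell-centres give an ``isolated vertex'' in $Q^+$. In fact the twelve edge-neighbours of $Q$ have two odd coordinates and some of them may be red, so $Q^+$ may contain edges as well; the clean statement (which the paper uses) is that all six face-neighbours are white, so $Q^-$ is the full boundary sphere.
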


\begin{lemma}
\label{l:black}
  Removing a black cube $Q$ from $A$ yields a grid complex $A'$ which is not an animal.
\end{lemma}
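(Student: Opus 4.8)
The plan is to argue by contraposition using the local criterion of Lemma~\ref{l:remove}. Assuming that $A' = A \setminus \{Q\}$ is again an animal, the lemma forces both $Q^+ = Q \cap A$ and $Q^- = Q \cap A^-$ to be disks, so it suffices to exhibit, for \emph{every} black cube $Q$, a face-configuration around $Q$ for which one of $Q^+, Q^-$ fails to be a disk. I would first record one simplifying fact: the red core of each tunnel is surrounded by white cubes, so a black cube is never face-adjacent to a red cube except at the single square where $R$ meets the black cubes above $\mu$ (and its mirror image below). Consequently, away from those two squares, $Q^+$ is governed by the black neighbours of $Q$ and $Q^-$ by its white and out-of-box neighbours.

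The generic case is a black cube sitting in the interior of a wall that separates two consecutive passes of the space-filling tunnel. Reading off the expansions in Figures~\ref{f:magnify_bends} and~\ref{f:magnify_wb}, such a wall is exactly one cube thick and locally flat, so $Q$ has white (or out-of-box) cubes on two \emph{opposite} faces and black cubes on the remaining four. Then $Q^-$ is a pair of opposite squares and $Q^+$ is the band of the other four squares; by the homology computations already made inside the proof of Lemma~\ref{l:remove} these give $\tilde H_0(Q^-) \cong \Z$ and $\tilde H_1(Q^+) \cong \Z$ respectively, so neither is a disk and $Q$ is not removable. I would carry out this check by running along the first red curve in $B_1$ and along $\gamma_{774}$ in $W$, treating straight segments and bends separately and using the $5^3$- and $3^3$-patterns of the two expansions.

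The delicate part --- and the only place where the construction is genuinely needed --- is to rule out the one removable local pattern, namely a black cube whose void faces meet along an edge rather than being opposite (a convex corner of a wall), for then both $Q^-$ and $Q^+$ are three-square disks. As the two-dimensional picture in Figure~\ref{f:two_copies} shows, exactly such removable cells are created by U-turns of the red curve and at the spots where it leaves the box, which is why the planar construction fails. Here the no-U-turn property of $\gamma_{444}$ and $\gamma_{774}$, together with the $3\times 3$ thickness of the tunnel, is what saves the day: at every bend the third dimension still backs the corner, so two opposite faces of each black cube remain void. I would verify this transitional claim case by case --- at the bends of both curves (using that they contain no U-turn and that their endpoints continue straight through $(3,2,2),(3,2,3)$ and $(5,3,2),(2,6,3)$), along the junctions of the black walls around the white box, on the outer shell of $B'_3$, and near the mirror plane $\mu$ and the two squares where $R$ meets the black region.

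I expect the main obstacle to be precisely this last exhaustive inspection: because the colour of each cube is produced by two successive expansions, a single black cube may lie simultaneously near a bend of the first curve, a bend of the second curve, a wall junction and the boundary, and one must confirm that in none of these transitional positions do three void faces line up around an edge. All of this ultimately rests on the absence of U-turns in $\gamma_{444}$ and $\gamma_{774}$; granting that, the remaining work is bookkeeping with the expansion figures.
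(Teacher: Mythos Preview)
Your overall strategy is the paper's strategy: invoke Lemma~\ref{l:remove} and, for every black cube $Q$, exhibit a feature of $Q^+$ or $Q^-$ that prevents it from being a disk, with the no-U-turn property of $\gamma_{444}$ doing the decisive work. So the plan is sound. Two points are worth flagging, one organisational and one a genuine gap in your case split.

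Organisationally, the paper does not argue via your ``generic wall cube / convex corner'' dichotomy. Instead it first disposes of the black cubes on the boundary of $B_3$ and those in layer~$15$ (these see white both inside and outside the relevant box, and the sixteen exceptional cubes near the exit of the red curve are handled by the straight-endpoint property of $\gamma_{444}$). For the remaining cubes in $[-1,14]^3$ it introduces a clean modular invariant: call a coordinate \emph{blackish} if it is $\equiv -1 \pmod 4$. The number of blackish coordinates of $Q$ then drives the case analysis (three blackish: all six face-neighbours black; one blackish: two opposite white neighbours, four black; two blackish: the subtle case where one looks at four $1\times 3\times 1$ boxes $\hat B_1,\dots,\hat B_4$ around $Q$ and uses that three of them white would force a U-turn on $\gamma_{444}$). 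This replaces your proposed walk ``along the first red curve / along $\gamma_{774}$'' by a uniform coordinate check, and in particular handles bends and straight pieces simultaneously.

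The actual gap in your sketch is that your two local patterns do not exhaust the possibilities. You describe (a) the flat wall cube with two opposite void faces and (b) the convex corner with two adjacent void faces, and you propose to show that (b) never occurs. But there is a third pattern: black cubes at the triple junctions of the wall system, where \emph{all six} face-neighbours are black (the ``three blackish coordinates'' case in the paper). Here $Q^+$ is the full boundary $2$-sphere and is not a disk, so $Q$ is still blocked---but this is a separate case, not covered by your ``two opposite faces remain void'' heuristic. More importantly, your claim that ``at every bend the third dimension still backs the corner, so two opposite faces of each black cube remain void'' is not literally true for the cubes in the two-blackish stratum near a bend: there the four side boxes $\hat B_i$ can be black or white in various combinations, and one must argue (as the paper does) that fewer than three of them are white, else $\gamma_{444}$ would have a U-turn, and that the remaining combinations each leave either an edge of $Q^-$ not in a square or make $Q^+$ an annulus. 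Your plan would eventually discover these subcases during the ``exhaustive inspection'', but as written the dichotomy undersells the amount of structure needed.
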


\subparagraph{Joint preliminaries for the proofs of all three lemmas.}
In all three proofs, we use the notation $Q^+$ and $Q^-$ from the proof of
Lemma~\ref{l:remove}. That is, $Q^+ := Q \cap A^+$ and $Q^- := Q \cap A^-$
where $A^+$ and $A^-$ are as in the statement of Lemma~\ref{l:remove}. (In
other words $Q^+$ is formed by the intersection of $Q$ with the black and red
cubes while $Q^-$ is formed by the intersection of $Q$ with the white cubes.
(We consider the cubes outside the bounding box also as white.)

In all three proofs, {\bf we assume for contradiction} that $A'$ is an animal.
Using Lemma~\ref{l:remove} we deduce that both $Q^+$ and $Q^-$ are disks. We
will consider different cases of how $Q$ is positioned in $A$. Each such case
will yield a contradiction with this disk property.

We also emphasize that in all three cases, it is sufficient to consider the
cubes which are either in $B_3$ or meet the mirror plane $\mu$; other cases are
mirror symmetric.

\begin{proof}[Proof of Lemma~\ref{l:red}.]
The dual graph of the
  collection of the red cubes $R$ is a path. If $Q$ corresponds to a vertex of
  degree two of this path, then either there is a bend at $Q$ in which case,
  $Q^-$ contains an edge not in square, thus $Q^-$ is not a disk; or there is
  no bend and $Q^+$ consists of two opposite squares---see Figure~\ref{f:redQ}
  covering both cases. Both cases show that $A'$ is not an animal. The only
  remaining case of $Q$ red is that $Q$ is the cube $(3, 11, 24)$. In this
  case, $Q^+$ again consists of two opposite squares as $(3, 11, 25)$ is black,
  $(3,11, 23)$ is red while all eight cubes $(a,b, 24)$ intersecting $Q$ but
  different from $Q$ are white.
\end{proof}

  \begin{figure}
    \begin{center}
      \includegraphics[page=13]{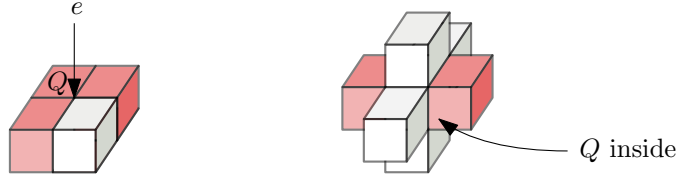}
      \caption{A neighborhood of a red cube $Q$. (Only some of the cubes for which we
      can determine the color are displayed.)}
      \label{f:redQ}
    \end{center}
\end{figure}

\begin{proof}[Proof of Lemma~\ref{l:white}.]

 We first resolve the easy cases:
  If $Q$ intersects the mirror plane $\mu$, then $Q^+$ meets both $K^+$ and
  $K^-$, thus it immediately follows that $Q^+$ has at least two components.
  Thus it is not a disk. Similarly, if $Q$ belongs to the box $[-2,15]^3$
  (which is the union of $\psi(Q_1)$ for cubes $Q_1$ in $B_1$), then
  it follows from the construction that $Q$ meets both $R$ and $K^+$ (see also
  Figure~\ref{f:magnify_bends}) but it does not meet $R \cap K^+$. It again
  follows that $Q^+$ has at least two components.

 It remains to consider the case that $Q$ belongs to the box $\psi(W) = [-1,14]^2
  \times [16, 24]$. (Note that the cubes in $[-2,15]^2 \times [16,25]$ but not
  in $\psi(W)$ are all black.) 
  
  We observe that the cubes $(a, b, c)$ in $\psi(W)$ for which $a$, $b$
  and $c$ is odd are red. These cubes are central cubes in boxes $\psi(Q_2)$
  where $Q_2$ is in (the white box) $W$; see Figure~\ref{f:magnify_wb}. We will
  call these cubes \emph{central}.

Now we separately consider the case when $Q$ is on the boundary of $\psi(W)$.
  In this case $Q$ has to meet both a red cube and a black cube. If $Q$ does
  not meet the intersection of $R$ and $K^+$, then $Q^+$ has two components
  which is the required contradiction. If $Q$ meets the intersection then, $Q$
  is one of the eight white cubes meeting the red cube $\phi(q'_2) = (3,11,24)$
  and in the same layer as this red cube. In this the neighborhood of any such
  $Q$ could be fully reconstructed from Figure~\ref{f:curves}; however we
  provide an argument independent of the exact way how $\gamma_{774}$ passes
  through the white box (except that we know that there is no bend at $q_2$);
  see Figure~\ref{f:white_end_cube}. If $Q$ shares an edge with $\phi(q'_2)$,
  then this edge is not in any square of $Q^+$, thus $Q^+$ is not a disk. If
  $Q$ shares a square with $\phi(q'_2)$, then $Q^+$ shares an edge with one of
  the four red (central) cubes depicted in layer 23 of Figure~\ref{f:white_end_cube}
  different from $\phi(q_2)$. This edge is not in any square of $Q^+$, thus
  $Q^+$ is not a disk.

\begin{figure}
    \begin{center}
      \includegraphics[page=18]{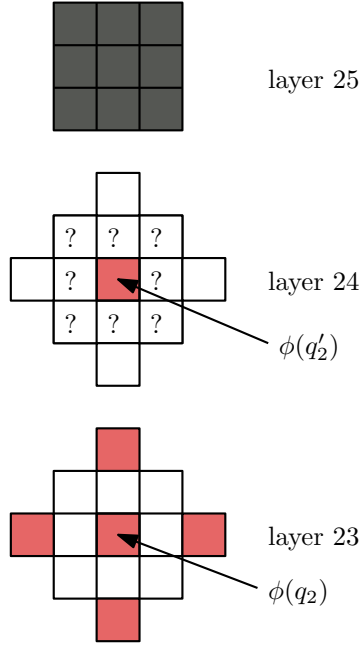}
      \caption{A neighborhood of a white cube $Q$ which meets both $R$ and
      $K^+$; $Q$ is one of the eight
      cubes marked with `?'. (Only some of the cubes are displayed.)}
      \label{f:white_end_cube}
    \end{center}
\end{figure}

  It remains to consider the case when $Q$ is $\phi(W)$ but not on the boundary
  of it. In this case, $Q$ meets no black cube. In this case, $Q$ may meet
  central cubes in vertices (if all coordinates of $Q$ are even), in edges (if
  one of the coordinates of $Q$ is odd), or in squares (if two of the
  coordinates of $Q$ are odd). In addition, a we remark that a cube with at
  most one coordinate odd is necessarily white (this easily follows from an
  inspection of Figure~\ref{f:magnify_wb}). 

  If $Q$ meets central cubes in vertices, then all six cubes neighboring $Q$
  (that is sharing a square) must be white, thus $Q^-$ is not a disk. If $Q$ meets the central
  cubes in squares, then two opposite cubes neighboring $Q$ are red while the
  remaining four cubes are white. This means that $Q^-$ is not a disk. Finally
  assume that $Q$ meets the central cubes in edges; see
  Figure~\ref{f:white_u_turn}. Consider the four cubes neighboring $Q$ which
  meet central cubes in squares. These cubes may be white or red, the remaining
  two neighbors of $Q$ are white. All four cubes cannot be red. If three of the
  four cubes are red, then this configuration comes from a U-turn on
  $\gamma_{774}$ which is a contradiction that $\gamma_{774}$ does not form
  U-turns. Finally, if two or less of the cubes are red, then either there is
  an edge in $Q^+$ shared with one of the central cubes which is not in any
  square of $Q^+$, or exactly two opposite neighbors of $Q$ are red. In each of
  these cases $Q^+$ cannot be a disk.
\end{proof}

\begin{figure}
    \begin{center}
      \includegraphics[page=19]{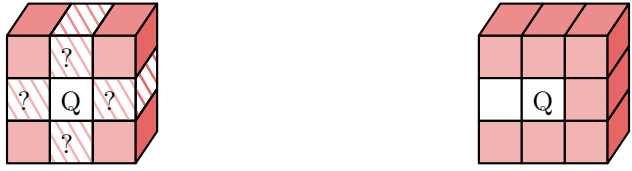}
      \caption{The cube $Q$ meeting the central cubes in edges and the U-turn.}
      \label{f:white_u_turn}
    \end{center}
\end{figure}

\begin{proof}[Proof of Lemma~\ref{l:black}]
  We start our considerations with the case that 
  $Q$ is on the boundary of $B_3$, then it intersects a white cube
outside $B_3$ as well as it intersects a white cube inside $B_3$. If it does
not intersect a white cube on the boundary of $B_3$, then we get that $Q^-$ has
at least two components. If it intersects a white cube on the boundary of
$B_3$, then $Q$ is one of the 16 cubes emphasized in brown in
Figure~\ref{f:brown}, left. (The left picture shows a part of the lowest level
of $B_3$.) Each such cube intersects one of the emphasized white cubes in
Figure~\ref{f:brown}, right in a vertex or edge that cannot belong to a square
of $Q^-$. Thus $Q^-$ cannot be a disk. (Note that we crucially use that
the first red curve does not have a bend in cube $(3,2,1)$, considered in
$B_1$.) 

\begin{figure}
    \begin{center}
      \includegraphics[page=15]{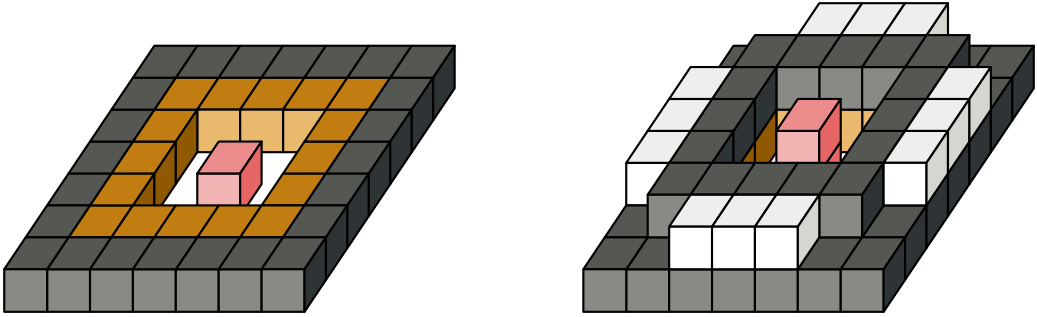}
      \caption{The cubes on the boundary of $B_3$ which intersect a white cube
      on the boundary of $B_3$.}
      \label{f:brown}
    \end{center}
\end{figure}

  Similarly, if $Q$ is in layer 15, which is on the boundary of $[-2,15]^3$,
  then intersects a white cube in $[-2,15]^3$ as well as a white cube outside
  $[-2,15]^3$. If $Q$ does not intersect a white cube in layer 15, then we get
  that $Q^-$ has at least two components. If $Q$ intersects a white in layer
  15, then we are essentially in  the same case as before, only the
  Figure~\ref{f:brown} should be mirrored through a plane orthogonal to the
  $z$-axis. (Here we use that the first red curve does not have a bend in cube
  $(3,2,4)$, considered in $B_1$.) We resolve this case in the same way as the
  analogous case before.

  Finally, there are no black cubes in $\phi(W)$ thus we may assume that $Q$ is
  in $[-2,15]^3$ but not on the boundary. That is, $Q$ is in the box $[-1,
  14]^3$. Here the consideration is essentially the same as for the last cases
  of the previous lemma (only the picture is somewhat rescaled). But we of
  course provide full detail.

  By computing the coordinates in the first and the second expansion we observe
  that for any cube $Q_1$ in $B_1$, the central red cube of $\psi(Q_1)$ has
  coordinates of form $(4i + 1, 4j + 1, 4k + 1)$ where $i, j$ and $k$ are
  integers. We say that a coordinate of a cube in $[-2,15]^3$ is
  \emph{blackish} if it gives remainder $-1$ modulo $4$. 
  The aforementioned observation implies (by checking Figure~\ref{f:magnify_bends}) that the
  cubes where at least two coordinates are blackish are black. If one
  coordinate is blackish, then the cube can be of any color, and if none of the 
  coordinates is blackish, then the cube is red or white.

  Now if all coordinates of $Q$ are blackish, then the cubes neighboring with
  $Q$ (that is, sharing a square with $Q$) are all black, thus $Q^+$ cannot be
  a disk. If exactly one coordinate of $Q$ is blackish, then $Q$ has exactly
  two white opposite white neighbors (with no coordinate blackish) and all
  remaining four neighbors are black. (In Figure~\ref{f:magnify_bends} this
  corresponds to a setting that $Q$ is on some side of $\psi(Q_1)$ for some
  $Q_1$ without a hole formed by the white and red cubes.)

  The final case is to consider the setting when exactly two coordinates of $Q$
  are blackish. Without loss of generality we assume that the blackish
  coordinates are $x$ and $z$. That is, there are integers $i, j, k$ and $r \in
  \{0,1,2\}$ such that $Q$ is a $(4i -1, 4j + r, 4k-1)$ cube; see
  Figure~\ref{f:black_u_turn}, left. $Q$ is one of the three black cubes. By
  $B(Q)$ we denote the $1 \times 3 \times 1$ box formed by these three cubes (independently of the
  position of $Q$ in this box).
  Then all twelve cubes $(4i - 1 \pm 1, 4j + r', 4k - 1 \pm 1)$ for $r' \in
  \{0, 1, 2\}$ are white; see Figure~\ref{f:black_u_turn}, left again.
  Next we consider the four cubes $(4i-1 \pm 2, 4j + 1, 4k -1 \pm 2)$, these
  cubes have to be red as they are central cubes of $\phi(Q_2)$ for some cubes
  $Q_2$; see Figure~\ref{f:black_u_turn}, middle. We also consider four $1 \times 3
  \times 1$ boxes $\hat B_1, \dots \hat B_4$ sharing with $B(Q)$ rectangles of 
  dimensions $1$ and $3$. The cubes in each of these four boxes $\hat B_i$ are
  either all white or all black. Similarly as in the proof of
  Lemma~\ref{l:white} we distinguish how many of these boxes are white.

\begin{figure}
    \begin{center}
      \includegraphics[page=20]{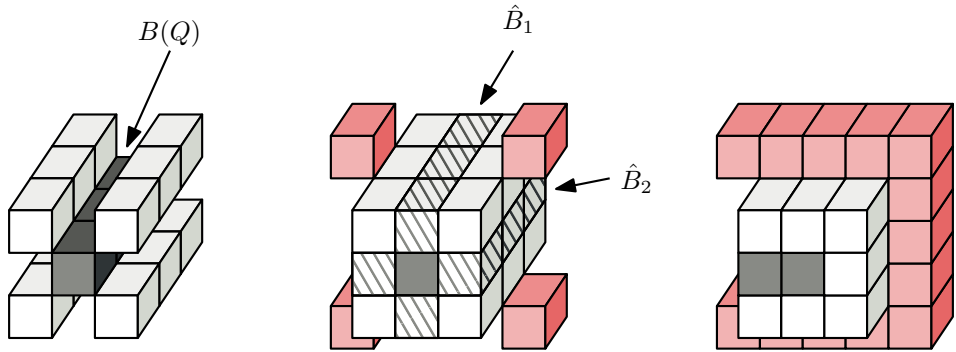}
      \caption{Neighborhood of $Q$ in the last case.}
      \label{f:black_u_turn}
    \end{center}
\end{figure}
 
 All four boxes cannot be white (this would correspond to a loop on
 $\gamma_{444}$. If three of the boxes are white, this corresponds to a U-turn
 on $\gamma_{444}$; see Figure~\ref{f:black_u_turn}, right. If two opposite boxes
 among $\hat B_1, \dots \hat B_4$ are white then $Q$ neighbors two opposite
 white cubes while the remaining neighbors of $Q$ are black. Thus $Q^+$ is an 
 annulus and not a disk. In all remaining cases $Q$ contains an edge share with
 one of the twelve white cube in Figure~\ref{f:black_u_turn}. This edge belongs
 to $Q^-$ but it does not belong to any square of $Q^-$. Thus $Q^-$ is not a
 disk, a contradiction. We have contradicted all cases which finishes the proof
 of the lemma and therefore it finishes the proof of Theorem~\ref{t:main} as
 well.
\end{proof}

\bibliography{animals}

\appendix
\section{Comments on Nakamura's proof.}

Here we provide more details why the author of this paper considers Nakamura's
proof from~\cite{nakamura10} incomplete. Some of the key definitions or steps
in~\cite{nakamura10} leave some freedom in interpretation. It might be possible
that some of the ideas could be reused and the proof could be significantly
reworked into a complete proof. (But the author of this manuscript doubts
whether this is really possible.)

It is not feasible to rewrite and explain all the contents of~\cite{nakamura10}
here in order to point out the gaps. However, we at least try to sketch the
main technique (magnification via dilations). Given an animal, the cubes in the
animal are considered black whereas the cubes outside the animal are considered
white. During magnification and dilations, they are treated in (almost) the
same way. A cube $Q$ with coordinates $(x,y,z)$ can be $x$-dilated to position
$(x',y,z)$ with $x' > x$ so that we gradually color the cubes $(x+1, y, z),
\dots, (x',y,z)$ with the same color as $Q$ provided that we have an animal in
each intermediate step. (If all the cubes are already colored with a correct
color, then we do nothing.) Successive dilations may magnify the animal.
Figure~\ref{f:dilation} displays a magnification
of a 2-dimensional animal first by $x$-dilations and then by (analogous)
$y$-dilations. All the colorful squares should be treated as black but the
colors help to distinguish individual dilations. (Note that the dilations of
the white square and the grey square in fact do nothing.) In general,
magnifications via dilations work well in dimension 2 but there may be
``undilatable cubes'' in dimension 3; see examples in~\cite{nakamura10}.

\begin{figure}        
\begin{center}
  \includegraphics[page=1]{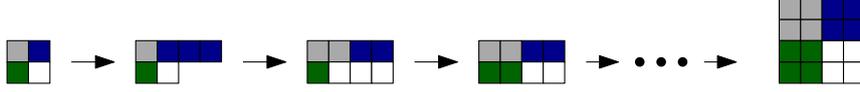}
\end{center}
\caption{Dilation of a $2$-dimensional animal consisting of four squares.}
  \label{f:dilation}
\end{figure}

  Nakamura's proof proceeds in two main steps. The first step is to perform
  dilation in some well defined order but also subject to some side rules. 
  The aim of the first step is to remove ``undilatable cubes''. The second step
  is to magnify the animal coming from the first step and subsequently
  to reduce the magnified animal to a single cube. The second step refers to
  another unpublished technical report~\cite{nakamura-morita-imai06}. We will
  comment mainly on the first step (not judging~\cite{nakamura-morita-imai06}).

Now we provide specific comments (this is probably not a complete list of
problems in~\cite{nakamura10}):

\begin{enumerate}
  \item One of our main objections is that the animal is not \emph{static} but
    \emph{dynamic}. It evolves during the dilations and also the notions such
    as ``undilatable cube'' or ``undilatable pattern'' evolve as well. This
    evolution does not seem to be carefully addressed in~\cite{nakamura10}. A
    specific example is a definition of \emph{subordination} below Claim 4.
    This defines a certain partial order which dynamically depends on the
    intermediate dilations.  However, subordination seems to be used in the
    proof of Claim~5 in a static way. In a proof of Claim~5, the aim is to
    transfer a certain ``undilatable cube'' $p$ into a ``dilatable'' one. If
    this fails because of subordination another cube $h$ (higher in the order) 
    is taken instead. However, $h$ may possibly appear only during the
    dilations. Thus it is not clear whether $h$ is ``undilatable'' in the
    beginning of the process yielding a required simplification. This does not
    seem treated carefully in the manuscript.
  \item Another main objection is that some of the proofs are incomplete.
    Claim~3 in~\cite{nakamura10} states that there is no ``non-applicable''
    outside certain region $R^*$ while the animal evolves during the dilations.
    The proof disusses only one dilation in $y$-direction and one dilation in
    $z$-direction. Even this part of the proof is questionable because the
    shape of $R^*$ is not discussed in the proof. However, more importantly,
    the author claims that the proof for next dilations can be obtained by a
    quite similar argument. This is far from being obvious, because the first steps
    presumable create undilatable cubes outside the bounding box but the region
    $R^*$ does not seem to be updated after the first two dilations. In
    addition, Claim~4 claims a stronger statement that there is no undilatable
    cube outside of the bounding box without a proof (only stating that this is
    the same as the previous proof). This is very confusing, why didn't the
    author prove the stronger statment rather than the weaker one. In addition,
    ``undilatable cubes'' outside the original bounding box seem to exist after
    a sequence of dilations; see a sketch in Figure~\ref{f:undilatable}. (We do
    not provide a full 3-dimensional counterexample but the Figure sketches a
    problem that is not discussed neither in the proof of Claim~4 nor in the
    proof of Claim~3.
    \begin{figure}
      \begin{center}
	\includegraphics[page=21]{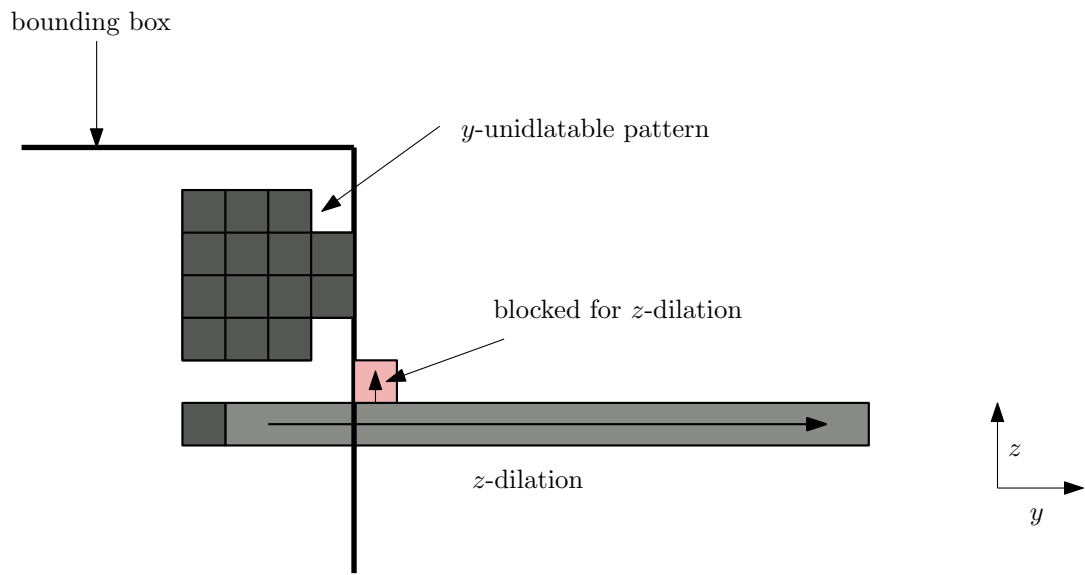}
	\caption{Consider a cut through the animal by a plane perpendicular to
	the $x$-axis. The first dilation is performed in the $y$-direction.
	After this dilation, some cube outside the $z$-direction may be blocked
	due to some $y$-undilatable pattern.}
	\label{f:undilatable}
      \end{center}
    \end{figure}
 \item Another problem seems to be the structure of the proof of Claim 5. The
   aim of the claim is to show that after a finite number of dilations, the
    number of non-applicable cubes is reduced. The author aims to show some
    non-applicable cube can be changed into an applicable one. However, a
    $y$-dilation of some cube may create new $z$-undilatable cubes. (This is
    not discussed in the proof and the $y$-dilation is not forbidden in this
    case.) An example when this may occur is sketched in a $2$-dimensional cut
    in Figure~\ref{f:blocking}.
    \begin{figure}
      \begin{center}
	\includegraphics[page=22]{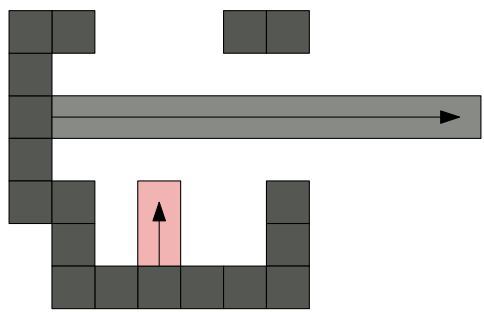}
	\caption{A $y$-dilation blocking a $z$-dilation.}
	\label{f:blocking}
      \end{center}
    \end{figure}

 \item On page 6, below Figure 6, the author claims that it is sufficient to
   avoid undilatable patterns and then it is sufficient to know that the
   problem for simple deformations is solvable, referring
   to~\cite{klette-rosenfeld04}. We point out that~\cite{klette-rosenfeld04} is
   a book with $\sim 600$ pages, thus it may be difficult to understand where
   the author refers to. In particular, \cite{klette-rosenfeld04} seems to
   discuss how to perform a magnification of the image via simple deformations
   in dimension $2$ but not in dimension $3$. This is a huge difference. It is
   possible that the author intends to refer to the
   manuscript~\cite{nakamura-morita-imai06}. But even in this case, this claim
   would deserve more details. Presumably, even if the animal contains no
   undilattable patterns, the may be created during the simple deformations.
\end{enumerate}

\end{document}